\newtheorem{thm}{Theorem}[section]
\newtheorem{lem}[thm]{Lemma}
\newtheorem{prop}[thm]{Proposition}
\newtheorem*{conjecture*}{Conjecture}
\newtheorem*{notThm*}{Not Yet a Theorem}
\newtheorem*{benTrick*}{Benedetto's Trick}
\newtheorem*{thm*}{Theorem}
\theoremstyle{remark} 
\newtheorem*{question*}{Question}
\newtheorem{remark}[thm]{Remark}
\theoremstyle{definition}
\numberwithin{equation}{section}  
\newcommand{\ZZ}{\mathbb{Z}}     
\newcommand{\RR}{\mathbb{R}}     
\newcommand{\Aff}{\mathbb{A}}      
\newcommand{\QQ}{\mathbb{Q}}      
\newcommand{\CC}{\mathbb{C}}      
\newcommand{\be}{\begin{equation}}
\newcommand{\ee}{\end{equation}}
\newcommand{\benn}{\begin{equation*}}
\newcommand{\eenn}{\end{equation*}}
\newcommand{\ba}{\begin{aligned}}
\newcommand{\ea}{\end{aligned}}
\newcommand{\bbm}{\begin{bmatrix}}
\newcommand{\ebm}{\end{bmatrix}}
\newcommand{\bpm}{\begin{pmatrix}}
\newcommand{\epm}{\end{pmatrix}}
\newcommand{\bi}{\begin{itemize}}
\newcommand{\ei}{\end{itemize}}
\newcommand{\an}[1]{\operatorname{an}}  
\newcommand{\Hawaii}{Hawai\kern.05em`\kern.05em\relax i}
\newcommand{\PrePer}{\mathrm{PrePer}}
\title[Benedetto's Trick]{Benedetto's trick and existence of rational preperiodic structures for quadratic polynomials}
\author{Xander Faber}
\address{
Department of Mathematics \\
University of Hawaii \\
Honolulu, HI}
\email{xander@math.hawaii.edu}
\begin{document}
	\begin{abstract}
		We refine a result of R. Benedetto in $p$-adic analysis in order to exhibit infinitely many quadratic polynomials over $\QQ$ with a specified graph of rational preperiodic points. 
	\end{abstract}

\maketitle

\section{Introduction}
\label{intro}

	Let $K$ be a field. For $c \in K$, the quadratic polynomial $f_c(z) = z^2 + c$ is an endomorphism of the affine line $\Aff^1_K$. Via iteration, we may view it as a dynamical system on $\Aff^1(K) = K$. The set of $K$-rational preperiodic points --- those with finite forward orbit under $f_c$ --- will be denoted by $\PrePer(f_c, K)$. We equip this set with the structure of a directed graph by drawing an arrow from $x$ to $f_c(x)$ for each $x \in \PrePer(f_c,K)$. By Northcott's theorem, $\PrePer(f_c, K)$ is a \textit{finite} graph when $K$ is a number field. A well known conjecture of Morton and Silverman \cite{Morton_Silverman_1994} implies that there are only finitely many isomorphism types of directed graphs that can arise as one varies over all $c \in K$ and all number fields $K / \QQ$ of bounded degree. The problem of which isomorphism types can occur has been investigated extensively when $K = \QQ$ \cite{Walde_Russo,Morton_Arithmetic_Properties_II_1998,Poonen-Flynn-Schaefer_1997,Poonen_Preperiodic_Classification_1998,Stoll_6-cycles_2008}. Poonen has conjectured that there are exactly 12 isomorphism types that may arise over $\QQ$ \cite{Poonen_Preperiodic_Classification_1998}.  More recently, the case of quadratic extensions $K / \QQ$ has been studied by the author  and others \cite{JXD_2012,JXD_2013_I,Hutz_Ingram_2013}. 
	
	Poonen has remarked \cite[p.18]{Poonen_Preperiodic_Classification_1998} that the graphs shown in the figure below occur for infinitely many rational parameters $c$. This statement follows from Conjecture~2 and the results in \cite{Poonen_Preperiodic_Classification_1998}, but an unconditional proof is desirable. The goal of this note is to provide one.  

\begin{thm}
\label{Thm: Main}
	For each of the graphs $G$ shown 
below, there exist infinitely many parameters $c \in \QQ$ such that $\PrePer(f_c, \QQ) \cong G$. 
\begin{center}
	\begin{picture}(450,155)(-130,-25)
	
		\put(-104,88){\includegraphics[scale=.8]{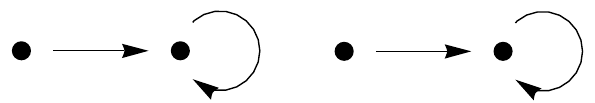}}
		\put(-97,45){\includegraphics[scale=.8]{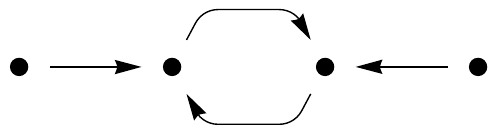}}
		\put(-101,-15){\includegraphics[scale=.7]{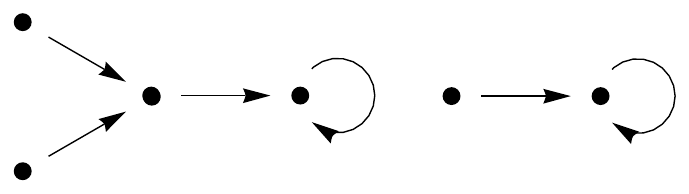}}
		\put(65,-5){\includegraphics[scale=.8]{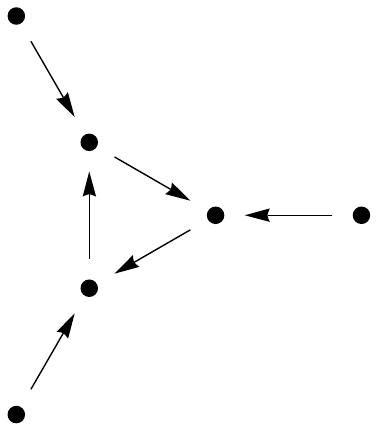}}
		\put(180,64){\includegraphics[scale=.75]{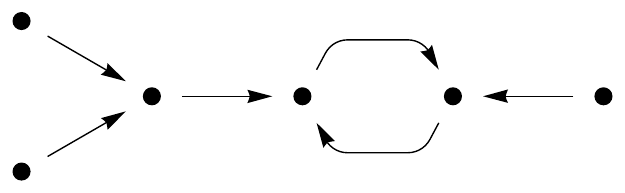}}
		\put(188,-15){\includegraphics[scale=.75]{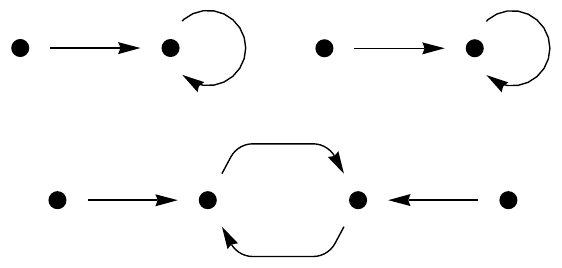}}
		
		\put(-130,122){$\underline{\hspace{450px}}$}
		\put(-130,-18){$\underline{\hspace{450px}}$}
		\put(-132,-20){\rotatebox{90}{$\underline{\hspace{140px}}$}}
		\put(42,-20){\rotatebox{90}{$\underline{\hspace{140px}}$}}
		\put(160,-20){\rotatebox{90}{$\underline{\hspace{140px}}$}}
		\put(320,-20){\rotatebox{90}{$\underline{\hspace{140px}}$}}								
		\put(-130,85){$\underline{\hspace{173px}}$}	
		\put(-130,40){$\underline{\hspace{173px}}$}	
		\put(162,60){$\underline{\hspace{159px}}$}
			
		\put(-125, 111){{\scriptsize $\mathbf{4(1,1)}$}}
		\put(-125, 74){{\scriptsize $\mathbf{4(2)}$}}
		\put(-125, 29){{\scriptsize $\mathbf{6(1,1)}$}}
		\put(50, 105){{\scriptsize $\mathbf{6(3)}$}}		
		\put(167, 111){{\scriptsize $\mathbf{6(2)}$}}
		\put(167, 49){{\scriptsize $\mathbf{8(2,1,1)}$}}								
						
	\end{picture}
\end{center}
\end{thm}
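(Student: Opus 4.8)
The proof of Theorem~\ref{Thm: Main} will have two ingredients for each of the six graphs $G$: a \emph{construction} step that produces an infinite family of parameters $c$ for which $G$ sits inside $\PrePer(f_c,\QQ)$, and a \emph{rigidity} step, powered by the $p$-adic refinement of Benedetto's theorem announced in the abstract, that shows infinitely many members of this family carry no further rational preperiodic points.

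For the construction step I would assemble each $G$ from rational fixed points, rational $2$-cycles, rational $3$-cycles, and their iterated preimages, using heavily the identity $f_c(-z)=f_c(z)$. Demanding a rational fixed point means $1-4c$ is a square; writing $c = t - t^2$ gives the fixed points $\{t,\,1-t\}$, and together with their negatives $\{-t,\,t-1\}$ this is the graph $4(1,1)$. Demanding a rational $2$-cycle means $-3-4c$ is a square; writing $c = t - t^2 - 1$ gives the $2$-cycle $\{-t,\,t-1\}$, and with its negatives $\{t,\,1-t\}$ this is $4(2)$. A rational $3$-cycle is a point on the (genus-zero) curve $Y_1(3)$, for which Morton gave an explicit rational parametrization; the $3$-cycle together with its three negatives is $6(3)$. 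The remaining three graphs impose one further rationality condition: $6(1,1)$ adds one extra rational preimage of a vertex of the $4(1,1)$ configuration, $6(2)$ adds one for $4(2)$, and $8(2,1,1)$ asks that $1-4c$ and $-3-4c$ be squares simultaneously. In each case this carves out a genus-zero curve possessing a rational point, hence a rational parametrization $c = c_G(t)$; I would record the finite set of degenerate values of $t$ (where two prescribed vertices collide, or a periodic point has smaller period than intended) and conclude that $G \hookrightarrow \PrePer(f_{c_G(t)},\QQ)$ for all other $t$, yielding already infinitely many $c$ with $G$ as a subgraph.

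For the rigidity step I would fix, for each $G$, a prime $p$ and a residue $c_0$ modulo a suitable power $p^N$ such that: (i) $c_0$ lies in the image of $c_G$ modulo $p^N$, so that $\{\,t\in\QQ: c_G(t)\equiv c_0 \pmod{p^N}\,\}$ is infinite and every such $c_G(t)$ has good reduction at $p$; and (ii) the reduced map $\bar f_{c_0}$ over $\FF_p$ has critical orbit and preperiodic structure for which Benedetto's trick forces $\#\PrePer(f_c,\QQ) \le \#G$ for every $c\equiv c_0\pmod{p^N}$. Because each vertex of the graph $\PrePer(f_c,\QQ)$ has exactly one outgoing edge, a subgraph spanning all the vertices must be the whole graph; so combining (ii) with the inclusion $G\hookrightarrow\PrePer(f_c,\QQ)$ from the construction step gives $\PrePer(f_c,\QQ)\cong G$ for every non-degenerate $t$ in the prescribed congruence class. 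Since $c_G$ is non-constant this is an infinite set of parameters $c$, completing the proof for that $G$.

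The main obstacle is step (ii). One needs the refined version of Benedetto's bound to be strong enough that a \emph{single} good prime controls all of $\PrePer(f_c,\QQ)$---not only the periodic points of small period, which is comparatively classical, but also the lengths of the tails of strictly preperiodic points---so that the bound is uniform across a congruence family of $c$. Granting that (it is the $p$-adic analysis carried out in the body of the paper, which I would simply invoke), one must still exhibit, for each of the six graphs, an explicit $(p, c_0, N)$ meeting conditions (i) and (ii); this is a finite search, but a delicate one, since the reduction $\bar f_{c_0}$ can pick up spurious $\FF_p$-rational preperiodic points and small primes---$p=2$ in particular---may behave anomalously, so $p$ must be chosen large enough to separate the structure of $G$ yet compatible with the parametrization $c_G$.
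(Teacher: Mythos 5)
Your construction step is essentially the paper's (it amounts to Poonen's rational parametrizations of the curves attached to each graph, and the inclusion $G\subset\PrePer(f_{c_G(t)},\QQ)$ for non-degenerate $t$ is fine), but your rigidity step (ii) rests on a misreading of what Benedetto's trick provides, and this is a genuine gap. The trick is a \emph{bad-reduction} argument: one needs a prime $\ell$ with $|c|_\ell>1$, at which the filled Julia set is trapped in at most $d^2$ disks whose radius $r^{-1/d}$ is tiny compared with its diameter $r$; letting $r\to\infty$ along the family, the product formula then forbids two rational preperiodic points from sharing the same disk at every place, which is what yields the hard counts $4$, $8$, $16$. At a prime of good reduction it gives nothing: there $\mathcal{K}_{f_c,p}=D(0,1)$ and every point of $\FF_p$ is preperiodic for the reduced map, so no data about $\bar f_{c_0}$ over $\FF_p$, and no congruence $c\equiv c_0\pmod{p^N}$, can force $\#\PrePer(f_c,\QQ)\le\#G$. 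Whether extra rational preperiodic points exist (an extra rational preimage of a vertex, or a rational cycle of length $\ge 4$) is a global square/rational-point condition not determined by $c$ modulo a fixed prime power; a congruence class can at most exclude finitely many of the competing genus-zero families, while the honest consequences of good reduction at one prime (injectivity of reduction on periodic cycles, hence a period bound polynomial in $p$, plus a small tail-length bound) still leave you needing to rule out rational cycles of every length from $4$ up to that bound --- which is unconditionally known only up to length $5$ (length $6$ is conditional, longer lengths are open). So step (ii) cannot be completed with known results.

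The paper's route is different precisely to avoid this. It does not impose a congruence at a good prime; instead it chooses the parameters $c_n$ so that the \emph{only} primes of bad reduction are $2$, $\infty$, and one, two, or three large primes --- this global control of the denominator is why the parametrizations are evaluated at $t=p_n/q_n$ with $p_n,q_n$ prime and why the Prime Number Theorem (for $p_n/q_n\to1$) and van der Corput's theorem on 3-term progressions of primes (for the three-pole parametrization of $6(3)$) appear. The refined local lemma at the large bad primes plus the product formula then give $\#\PrePer(f_{c_n},\QQ)\le 2^{\#C_n+1}$, which already finishes $4(1,1)$, $4(2)$ and $8(2,1,1)$; for the 6-vertex graphs the bound is $8$ or $16$, and the argument is completed using the symmetry $f_c(-z)=f_c(z)$ (an unaccounted $m$-cycle contributes $2m$ extra points) together with Poonen's and Morton/Flynn--Poonen--Schaefer's unconditional non-existence theorems. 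Your proposal has no analogue of either the controlled bad primes or these final exclusions.
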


\begin{remark}
	We follow the labeling convention in \cite{JXD_2012}. Write $N(\ell_1, \ell_2, \ldots)$ for a graph with $N$ vertices and directed cycles of lengths $\ell_1, \ell_2, \ldots$, in nonincreasing order. 
\end{remark}

	
	
We briefly explain a heuristic for the theorem. Let us say that a finite directed graph $G$ is \textbf{admissible} if there is a parameter $c \in \bar \QQ$ such that $\PrePer(f_c, \bar \QQ) \supset G$. For each admissible graph $G$, there exists a smooth connected projective algebraic curve $X_G$ over $\QQ$ such that all but finitely many of the algebraic points $x \in X_G(\bar \QQ)$ correspond to data $(x_1, \ldots, x_n, c) \in \bar \QQ^{n+1}$ for which $x_1, \ldots, x_n$ are preperiodic for $f_c$ and generate the graph $G$ under the action of $f_c$. In particular, if $(x_1, \ldots, x_n, c) \in X_G(\QQ)$, then $G \subset \PrePer(f_c,\QQ)$. For each graph $G$ as in the theorem, the curve $X_G$ is \textit{rational}, and hence admits infinitely many parameters $c \in \QQ$ for which $\PrePer(f_c, \QQ)$ \textit{contains} the graph $G$. 
While this is strong evidence for the theorem, the obstruction to making it a rigorous proof is that there are infinitely many admissible graphs $H$ properly containing $G$, and so it is possible that all but finitely many points of $X_G(\QQ)$ actually correspond to a larger graph~$H$. 
	
	Rather than taking an approach via rational points on curves, we start by constructing a sequence of parameters $(c_n)$ for which there exist preperiodic points in the desired configuration, and for which we can control the prime factors occurring in the denominator. (This step uses the Prime Number Theorem in one case and van der Corput's theorem on 3-term arithmetic progressions of primes in another.) We then apply a trick of Benedetto for bounding the number of rational preperiodic points of a polynomial. In essence, his technique focuses on a place $p$ of genuinely bad reduction for $f_c$ and captures the filled Julia set inside a collection of at most~$2$ $p$-adic disks of moderate radius, or $4$~disks of very small radius. By the product formula, each such disk will contain at most one rational preperiodic point. This argument is highly reminiscent of the line of attack initiated by Call and Goldstine \cite{Call-Goldstine_1997} (and then refined and extended by Benedetto). The bound on the number of rational preperiodic points so produced is sharp enough to obtain the desired result for the graphs with 4 or 8 vertices. For the graphs $G$ with 6 vertices, we must also invoke non-existence results of Poonen \cite{Poonen_Preperiodic_Classification_1998} to exclude the occurrence of graphs that properly contain $G$. 

Benedetto's Trick for bounding the number of rational preperiodic points of a polynomial is explained in \S2. The proof of Theorem~\ref{Thm: Main} is given in \S3. We argue in \S4 that the theorem is best possible in a precise sense.

\bigskip

\noindent \textbf{Acknowledgments.} The author would like to thank Rob Benedetto for helpful conversations on the history of these techniques, and Michael Stoll and Jordan Ellenberg for the pointer to van der Corput's theorem (via MathOverflow question 123605).

	
\section{Benedetto's Trick}	

	The goal of this section is to explain (and extend) a technique of Benedetto for obtaining global bounds on $\#\PrePer(f, \QQ)$ using local data and the product formula. 

	Let $p \leq \infty$ be a prime of $\QQ$. We write $\QQ_p$ for the completion of $\QQ$ at $p$ and $|\cdot|_p$ for its standard absolute value; we write $\CC_p$ for the completion of an algebraic closure of $\QQ_p$. In particular, $\QQ_\infty = \RR$, and $\CC_\infty = \CC$. For a polynomial $f$ with rational coefficients, we define the  \textit{filled Julia set} for $f$ at $p$ to be
		\[
			\mathcal{K}_{f,p} = \{x \in \CC_p \ : \ f^n(x) \not\rightarrow \infty \text{ as $n \to \infty$}\}.
		\]
The key observation is that preperiodic points for $f$ lie in the filled Julia set $\mathcal{K}_{f,p}$ for every prime~$p$.

The following lemma provides stronger bounds for the filled Julia set in the case of finite $p$. It is an extension of \cite[Lem.~4.3]{Benedetto_Function_Fields_2005}; the proof strategy is essentially the same. 

\begin{lem}
\label{Lem: Small disks}
	Let $p$ be a finite prime. Let $f \in \QQ[z]$ be a monic polynomial of degree $ d \geq 2$, and let $r$ be the radius of the minimal closed disk containing the filled Julia set $\mathcal{K}_{f,p}$. For any point $x_0 \in \mathcal{K}_{f,p}$, the following containments hold:
	\[
		\mathcal{K}_{f,p} \subset \bigcup_{f(x) = x_0} D(x, 1) \qquad \text{and} \qquad
		\mathcal{K}_{f,p} \subset \bigcup_{f^2(x) = x_0} D(x, r^{- 1/d}).
	\]
\end{lem}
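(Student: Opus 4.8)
The plan is to exploit the structure of the filled Julia set under a monic polynomial $f$ of degree $d$ over a non-archimedean field. First I would recall the standard description of $\mathcal{K}_{f,p}$ for monic $f$: since $f$ is monic of degree $d\geq 2$, there is a radius $R \geq 1$ such that on the circle $|z|_p = s$ with $s > R$ one has $|f(z)|_p = s^d$, so points of large absolute value escape to infinity; consequently $\mathcal{K}_{f,p}$ is bounded, $f^{-1}(\mathcal{K}_{f,p}) = \mathcal{K}_{f,p}$, and $f(\mathcal{K}_{f,p}) = \mathcal{K}_{f,p}$. Let $r$ denote the radius of the minimal closed disk containing $\mathcal{K}_{f,p}$; by the ultrametric inequality this minimal disk is $D(a,r)$ for any $a \in \mathcal{K}_{f,p}$, and $r$ is the diameter of $\mathcal{K}_{f,p}$. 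Note $r \geq $ (something we may or may not need to pin down precisely, but) in any case $r$ is attained as $|x - y|_p$ for suitable $x,y \in \mathcal{K}_{f,p}$.

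For the first containment, fix $x_0 \in \mathcal{K}_{f,p}$ and take any $w \in \mathcal{K}_{f,p}$. Since $f(\mathcal{K}_{f,p}) = \mathcal{K}_{f,p}$, actually I want the reverse: $w \in \mathcal{K}_{f,p} = f^{-1}(\mathcal{K}_{f,p})$, so $f(w) \in \mathcal{K}_{f,p}$, and I want to compare $w$ to a preimage of $x_0$. Write $f(z) - x_0 = \prod_{f(x)=x_0}(z - x)$, using that $f$ is monic of degree $d$; then
\[
	|f(w) - x_0|_p = \prod_{f(x) = x_0} |w - x|_p.
\]
Since $f(w), x_0 \in \mathcal{K}_{f,p}$ and $\mathcal{K}_{f,p}$ has diameter $r$, the left side is at most $r$. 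If $r \leq 1$ this already forces each factor $|w-x|_p \leq 1$ once we know $|w-x|_p \leq \max(|w-x_0'|_p, \dots)$ — but more simply: the filled Julia set is contained in a disk of radius $r \leq 1$ about any of its points (I will first handle the easy regime), hence about $x$ (which lies in $\mathcal{K}_{f,p}$), giving $w \in D(x,r) \subseteq D(x,1)$. If instead $r > 1$, then among the $d$ factors at least one satisfies $|w - x|_p \leq |f(w)-x_0|_p^{1/d} \cdot (\text{correction})$ — here is where I must argue that at least one root $x$ of $f(z)=x_0$ has $|w-x|_p \leq 1$: since $|f(w)-x_0|_p \leq r$ and there are $d$ roots, if every $|w - x_j|_p > 1$ then the product would exceed... no, that doesn't work directly since $r$ can be large. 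The correct argument: the roots of $f(z) = x_0$ themselves lie in $\mathcal{K}_{f,p}$, so they are pairwise within $r$; by the ultrametric inequality the product $\prod |w-x|_p$ over all $d$ roots, given $w\in \mathcal{K}_{f,p}$ so also within $r$ of each $x$... I expect the clean statement is that at least one factor has absolute value $\leq |f(w)-x_0|_p \cdot r^{-(d-1)} \leq r \cdot r^{-(d-1)} = r^{2-d} \leq 1$ when $r \geq 1$ and $d \geq 2$, using that the other $d-1$ factors are each $\leq r$. This gives $w \in D(x,1)$ for that root $x$, proving the first containment.

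For the second containment the same idea is iterated. Fix $x_0 \in \mathcal{K}_{f,p}$ and $w \in \mathcal{K}_{f,p}$; factor $f^2(z) - x_0 = \prod_{f^2(x) = x_0}(z-x)$, a monic polynomial of degree $d^2$, so
\[
	|f^2(w) - x_0|_p = \prod_{f^2(x)=x_0} |w - x|_p \leq r,
\]
with all $d^2$ roots lying in $\mathcal{K}_{f,p}$ and hence within $r$ of $w$. Bounding $d^2 - 1$ of the factors by $r$ gives a root $x$ with
\[
	|w - x|_p \leq r \cdot r^{-(d^2 - 1)} = r^{2 - d^2}.
\]
To land in $D(x, r^{-1/d})$ I need $r^{2-d^2} \leq r^{-1/d}$, i.e. (for $r \geq 1$) the exponent inequality $2 - d^2 \leq -1/d$, which holds for all $d \geq 2$; and the case $r < 1$ is again immediate since then $\mathcal{K}_{f,p} \subseteq D(x,r) \subseteq D(x, r^{-1/d})$. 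I anticipate the main obstacle is the bookkeeping in the $r > 1$ case: making precise, via a clean ultrametric argument, that one can always extract a single preimage $x$ absorbing essentially all of the smallness of $|f^k(w) - x_0|_p$ while the remaining factors contribute no more than powers of $r$ — this is exactly the mechanism in \cite[Lem.~4.3]{Benedetto_Function_Fields_2005}, and I would follow that template, being careful that the exponents $-1$ and $-1/d$ in the two displayed radii come out sharp.
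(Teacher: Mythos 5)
Your overall setup (total invariance of $\mathcal{K}_{f,p}$, factoring $f(w)-x_0=\prod_{f(x)=x_0}(w-x)$, and reducing to producing one small factor) matches the paper's, but the step that is supposed to produce the small factor runs in the wrong direction, and that step is exactly where the content of the lemma lies. You claim that, since the other $d-1$ factors are each $\leq r$, some factor satisfies $|w-x|_p \leq |f(w)-x_0|_p\, r^{-(d-1)}$. Upper bounds on the other factors give a \emph{lower} bound on the remaining one (namely $\geq |f(w)-x_0|_p\,r^{-(d-1)}$), not an upper bound; to divide the product by $r^{d-1}$ and get an upper bound you would need the other factors to be $\geq r$. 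Indeed, from the inputs you actually use (all $d$ factors $\leq r$, product $\leq r$) nothing stronger than $\min_j |w-x_j|_p \leq r^{1/d}$ follows: for $d=2$ the values $|w-x_1|_p=|w-x_2|_p=\sqrt{r}$ are consistent with everything you assume and violate your conclusion as soon as $r>1$. The same reversed inequality underlies the exponent $r^{2-d^2}$ you propose for the second containment, which is even \emph{stronger} than the stated radius $r^{-1/d}$ --- a further sign the derivation cannot be right.

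The missing ingredient is a \emph{lower} bound on one factor: for every $w\in\CC_p$ and every point $y$ of the filled Julia set, at least one preimage of $y$ lies at distance $\geq r$ from $w$. The paper obtains this from the structure of $f^{-1}(U_0)$ in the genuinely bad reduction case $r>1$: the preimage of the minimal disk $U_0$ is a union of at least two disks, each mapping onto $U_0$, and two of them, $V_s$ and $V_t$, lie at distance exactly $r$; since each of $V_s, V_t$ contains a preimage of $y$, the ultrametric inequality forces some preimage to be at distance $\geq r$ from any given $w$. With that fact the argument is a clean contrapositive: if $w$ avoids every $D(x,1)$ over $f(x)=x_0$ (resp.\ every $D(x,r^{-1/d})$ over $f^2(x)=x_0$), then $|f(w)-x_0|_p > 1^{d-1}\cdot r = r$ (resp.\ each $|f(w)-x_i|_p > (r^{-1/d})^{d-1}r = r^{1/d}$, hence $|f^2(w)-x_0|_p>r$), so the image escapes $U_0$ and $w\notin\mathcal{K}_{f,p}$. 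You would also need the standing facts that $r\geq 1$ always, with $r=1$ exactly in the potential good reduction case where $U_0$ is totally invariant and the statement is trivial (so your ``$r<1$'' regime does not occur). As written, the proposal does not prove the lemma.
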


\begin{remark}
In \cite[Lem.~4.3]{Benedetto_Function_Fields_2005}, Benedetto proves that the filled Julia set $\mathcal{K}_{f,p}$ is contained in a union of at most $d^2$ open disks $D(x, 1)^- = \{y \in \CC_p : |y - x|_p < 1\}$ as $x$ varies over the solutions to $f^2(z) = x_0$.
\end{remark}


\begin{proof}
	Write $U_0 \subset \CC_p$ for the minimal closed disk containing the filled Julia set of $f$; it has radius $r$ by assumption. Then $r \geq 1$, with equality if and only if $f$ has potential good reduction \cite[Lem.~2.5]{Benedetto_Global_Fields_2007}. If $r = 1$, then $U_0$ is totally invariant (i.e., $f^{-1}(U_0) = f(U_0) = U_0$), and the result is trivial. Let us assume in what remains that $r > 1$. 
	
	Write $x_1, \ldots, x_d$ for the elements of $f^{-1}(x_0)$, repeated according to multiplicity. For each $i = 1, \ldots, d$, denote by $x_{i,1}, \ldots, x_{i,d}$ the elements of $f^{-1}(x_i)$, again with multiplicity. 
	
	Let $V_1, \ldots, V_m$ be the distinct pre-images of $U_0$. Since $f$ has genuinely bad reduction, we know $m > 1$ and each $V_i$ maps surjectively onto $U_0$. There must be two distinct disks $V_s$ and $V_t$ at distance exactly $r$, else the entire filled Julia set would fit into a disk of radius strictly less than $r$. Also, for each $i = 1, \ldots, d$, each of $V_s$ and $V_t$ must contain at least one of $x_{i,1}, \ldots, x_{i,d}$ since each of these disks maps surjectively onto $U_0$. Finally, for every $i = 1, \ldots, d$ and $w \in \CC_p$, there is some $j = 1, \ldots, d$ such that $|w - x_{i,j}|_p \geq r$; otherwise, all of the $x_{i,j}$ would be at distance less than $r$ by the ultrametric inequality, contradicting the fact that $V_s$ and $V_t$ are at distance $r$. 

	For the first containment in the statement, let $w \in \CC_p \smallsetminus \bigcup_{f(x) = x_0} D(x, 1)$. For any $i = 1, \ldots, d$, we see that  
		\[
			|f(w) - x_i|_p = \prod_{1 \leq j \leq d} |w - x_{i,j}|_p > 1^{d-1} \cdot r = r.
		\]
 This means $f(w) \not\in U_0$. By the total invariance of the filled Julia set, we conclude that $w \not\in \mathcal{K}_{f,p}$.

	A similar argument will prove the second containment in the statement. Let 
	\[
		w \in \CC_p \smallsetminus \bigcup_{f^2(x) = x_0} D(x, r^{-1/d}). 
	\]
For any $i = 1, \ldots, d$, we see that  
		\[
			|f(w) - x_i|_p = \prod_{1 \leq j \leq d} |w - x_{i,j}|_p > \left(r^{-1/d}\right)^{d-1} \cdot r = r^{1/d}.
		\]
Therefore
	\[
		\left|f^2(w) - x_0 \right|_p = \prod_{1 \leq i \leq d} \left| f(w) - x_i \right|_p >  r. 
	\]
As before, we conclude that $w \not\in \mathcal{K}_{f,p}$. 
\end{proof}

For the statement, write $M_{\QQ}$ for the set of primes of $\QQ$, including $p = \infty$. 
	
\begin{benTrick*}
	Let $d \geq 2$ be an integer, and let $(g_n)_{n \geq 1}$ be a sequence of monic polynomials of degree~$d$ with $\QQ$-coefficients.  
Suppose that for each $n$ there exists a partition 
	\[
		M_{\QQ} = A_n \cup B_n \cup C_n
	\]
for which the following hypotheses hold:
	\begin{itemize}
		\item \textup{(}Good Reduction\textup{)} For each prime $\ell \in A_n$, the minimum radius of a closed disk that contains $\mathcal{K}_{g_n,\ell}$ is~1. 
		\item \textup{(}Controlled Bad Reduction\textup{)} The set $B_n$ is stable for $n \geq 1$ and contains $\infty$. For each $\ell \in B_n$,  there exists $R_\ell \geq 1$
			independent of $n$ such that the $\ell$-adic filled Julia set $\mathcal{K}_{g_n, \ell} \cap \QQ_\ell$ is contained inside the disk
			  $D(0, R_\ell)$. 
		\item \textup{(}Uncontrolled Bad Reduction\textup{)} The cardinality of $C_n$ is stable. 
			For each $\ell \in C_n$, if $r_{n, \ell}$ is the minimum radius of a closed disk containing the filled 
			Julia set $\mathcal{K}_{g_n, \ell}$, then $\min \{r_{n,\ell} : \ell \in C_n\}$ tends to infinity with $n$.
	\end{itemize}	
If $\#C_n \geq 1$, then $\# \PrePer(g_n, \QQ) \leq d^{\#C_n + 1}$ for all $n$ sufficiently large. 
\end{benTrick*}

\begin{remark}
	Let $s$ be the number of odd primes of bad reduction. For our application in the next section, we have $d = 2$ and $\#C_n = s$, so that our bound will be $2^{s+1}$. The proof of \cite[Thm.~6.9]{Call-Goldstine_1997} produces the bound $2^{s+1}\cdot R_\infty$, which is  generally weaker. 
\end{remark}
	
\begin{proof}
	A preperiodic point for $g_n$ must lie in the filled Julia set $\mathcal{K}_{g_n, \ell}$ for each prime $\ell$ of $\QQ$. If $\alpha$ and $\beta$ are distinct rational preperiodic points for $g_n$ and $\ell \in A_n \cup B_n$, this means 
	\begin{equation}
	\label{Eq: An and Bn}
		|\alpha - \beta|_\ell \leq 
			\begin{cases}
				1 & \text{if }\ell \in A_n \\
				2^{\epsilon(\ell)}R_\ell & \text{if }\ell \in B_n,
			\end{cases}
	\end{equation} 
where $\epsilon(\ell) = 1$ if $\ell = \infty$ and $0$ otherwise. We now use Lemma~\ref{Lem: Small disks} to provide a bound for the primes in $C_n$.  

	Assign an ordering to the elements of $C_n$, say $C_n = \{\ell_0, \ldots, \ell_t\}$. Write $r_{n,\ell_0}$ for the minimum radius of a disk that contains the filled Julia set $\mathcal{K}_{g_n, \ell_0}$. We apply Lemma~\ref{Lem: Small disks} to obtain $d^2$~disks $D_{0,1}, \ldots, D_{0,d^2}$ of radius $r_{n, \ell_0}^{-1/d}$ whose union contains the filled Julia set. (These disks  depend implicitly on the index $n$.) For each prime $\ell_j$ with $j = 1, \ldots, t$, we apply Lemma~\ref{Lem: Small disks} to obtain $d$~disks $D_{j,1}, \ldots, D_{j, d}$ of radius~1 whose union contains the filled Julia set $\mathcal{K}_{g_n, \ell_j}$. 
	
	To each element $x \in \PrePer(g_n, \QQ)$, we may assign an address 
		\[
			(a_0, \ldots, a_t) \in \{1,\ldots, d^2\} \times \{1,\ldots, d\}^t
		\] 
so that $x \in D_{j, a_j}$ for each $j=0, \ldots, t$. We claim that for $n$ sufficiently large, no two rational preperiodic points share the same address. Evidently, this implies the desired inequality: 
	\[
		\# \PrePer(g_n, \QQ) \leq d^2 \cdot d^{t} = d^{\#C_n + 1},
	\]
To prove the claim, suppose that $\alpha$ and $\beta$ are distinct rational preperiodic points for $g_n$ with the same address. Then
	\begin{equation}
	\label{Eq: Cn}
		|\alpha - \beta|_{\ell_j} \leq 
			\begin{cases}				
				r_{n, \ell_0}^{-1/d} & \text{if }j = 0 \\			
				1 & \text{if }j = 1, \ldots, t. 
			\end{cases}
	\end{equation}
Combining \eqref{Eq: An and Bn} with \eqref{Eq: Cn} and applying the product formula gives
	\[
		1 = \prod_{\ell \leq \infty} | \alpha - \beta|_\ell \leq r_{n, \ell_0}^{-1/d}\prod_{\ell \in B_n} 2^{\epsilon(\ell)} R_\ell. 
	\]
Since $r_{n, \ell_0} \to \infty$ with $n$, we obtain a contradiction. 
\end{proof}


\section{Proof of the theorem}

	In order to apply Benedetto's Trick, we make use of the following well known result that gives bounds for the various filled Julia sets for the polynomial $f_c(z) = z^2 + c$. For example, see \cite[\S2]{JXD_2012} for proofs. For notation, if $p \leq \infty$ is a prime of $\QQ$, $a \in \CC_p$, and $s > 0$, we write $D(a,s) = \{x \in \CC_p : |x-a|_p \leq s\}$ for the closed disk of radius $s$. 
	
\begin{prop}
\label{Prop: Real/padic}
	Let that $c \in \QQ$.  
	\begin{itemize}
		\item If $c \leq 1/4$, then		
			$\mathcal{K}_{f_c, \infty} \cap \RR \subset D\left(0, \frac{1}{2} + \sqrt{\frac{1}{4} - c}\right) \cap \RR$.
		\item If $p < \infty$ is prime and $|c|_p \leq 1$, then $\mathcal{K}_{f_c, p} = D(0,1)$.
		\item If $p < \infty$ is prime and $|c|_p > 1$, then $\mathcal{K}_{f_c, p} \subset \{x \in \CC_p \ : \ |x|_p = |c|_p^{1/2} \}$. 
	\end{itemize} 
\end{prop}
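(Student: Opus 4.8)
The plan is to prove these three facts about the filled Julia set of $f_c(z) = z^2 + c$ directly from the definition, exploiting the monic quadratic structure. Recall that $\mathcal{K}_{f_c,p} = \{x \in \CC_p : f_c^n(x) \not\to \infty\}$, and that the escape criterion is governed by a ``large ball'' outside of which iterates grow monotonically in absolute value. The key computation underlying all three cases is the elementary estimate $|f_c(x)|_p = |x^2 + c|_p$, which one bounds above and below by $\max(|x|_p^2, |c|_p)$ via the ultrametric inequality when $p$ is finite, and by triangle-inequality manipulations when $p = \infty$.

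First I would dispose of the archimedean case. Assume $c \le 1/4$ and set $\rho = \tfrac12 + \sqrt{\tfrac14 - c}$, which is the larger fixed point of $f_c$ on $\RR$ (it solves $\rho^2 + c = \rho$, i.e. $\rho^2 - \rho + c = 0$). For real $x$ with $|x| > \rho$, one checks that $f_c(x) = x^2 + c > x^2 + (\rho - \rho^2) = x^2 - \rho(\rho-1)$; since $|x| > \rho \ge 1$ (using $c \le 1/4 < 1$ one has $\rho \ge 1$... actually $\rho \ge 1$ iff $c \le 0$, so one must be a touch careful and instead argue that $f_c(x) > |x|$ whenever $|x| > \rho$ by examining the sign of $x^2 - x + c$ and $x^2 + x + c$). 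Concretely, $x^2 + c - |x| = |x|^2 - |x| + c > 0$ precisely when $|x| > \rho$, so iterates leave $D(0,\rho)\cap\RR$ strictly increasing and hence escape to $\infty$. Therefore $\mathcal{K}_{f_c,\infty} \cap \RR \subset D(0,\rho) \cap \RR$.

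Next, for finite $p$ with $|c|_p \le 1$: I claim $\mathcal{K}_{f_c,p} = D(0,1)$. If $|x|_p \le 1$ then $|f_c(x)|_p = |x^2 + c|_p \le \max(|x|_p^2, |c|_p) \le 1$, so $D(0,1)$ is forward-invariant and lies in the filled Julia set. Conversely, if $|x|_p > 1$, then $|x^2|_p = |x|_p^2 > |x|_p \ge |c|_p$ (the last since $|x|_p > 1 \ge |c|_p$), so by the ultrametric equality in the unequal case $|f_c(x)|_p = |x|_p^2 > |x|_p$; iterating, $|f_c^n(x)|_p = |x|_p^{2^n} \to \infty$. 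Hence $x \notin \mathcal{K}_{f_c,p}$, giving the reverse inclusion. For the third bullet, $|c|_p > 1$: if $|x|_p > |c|_p^{1/2}$ then $|x^2|_p = |x|_p^2 > |c|_p$, so again $|f_c(x)|_p = |x|_p^2 > |x|_p$ and the orbit escapes; if $|x|_p < |c|_p^{1/2}$ then $|x^2|_p < |c|_p$, so $|f_c(x)|_p = |c|_p > |c|_p^{1/2}$ and we are back in the escaping regime, so again $x \notin \mathcal{K}_{f_c,p}$. Thus $\mathcal{K}_{f_c,p} \subset \{x : |x|_p = |c|_p^{1/2}\}$.

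The only genuinely delicate point is the archimedean estimate, where the ultrametric inequality is unavailable and one must instead track the sign of the real quadratic $|x|^2 \pm |x| + c$ and verify that $\rho$ is exactly the threshold beyond which $|f_c(x)| > |x|$; this requires the hypothesis $c \le 1/4$ so that $\rho$ is real. (Note the proposition merely asserts a containment, so one does not need the reverse inclusion archimedeanly — the orbit of any $x$ outside $D(0,\rho)\cap\RR$ escapes, and that suffices.) Since the problem statement explicitly cites \cite[\S2]{JXD_2012} for the proofs, a reasonable alternative is simply to invoke that reference; I would include the short self-contained arguments above for completeness.
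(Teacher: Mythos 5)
Your argument is correct, and the comparison here is somewhat degenerate: the paper does not prove this proposition at all, but states it as a well known result and refers to \cite[\S2]{JXD_2012}, exactly the alternative you mention at the end. Your self-contained argument is the standard one that the reference carries out: the ultrametric identity $|f_c(x)|_p = \max\left(|x|_p^2, |c|_p\right)$ when the two terms have unequal absolute values settles both nonarchimedean bullets, and the sign of $t^2 - t + c$ at $t = |x|$ settles the real one. The only step you state too quickly is the archimedean escape: from $|x| > \rho$ you obtain a strictly increasing real orbit beyond $\rho$, but strict increase alone does not force divergence to $\infty$. Either add that a bounded increasing orbit would converge to a real fixed point of $f_c$ exceeding $\rho$, contradicting that $\rho$ is the largest real fixed point, or note quantitatively that once $f_c^n(x) \geq \rho + \epsilon$ the increment satisfies $f_c^{n+1}(x) - f_c^n(x) = \left(f_c^n(x) - \rho\right)\left(f_c^n(x) - (1-\rho)\right) \geq \epsilon(2\rho - 1 + \epsilon) \geq \epsilon^2 > 0$ (using $2\rho - 1 = \sqrt{1-4c} \geq 0$), so the orbit exceeds any bound. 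With that one sentence added, your proof is complete and is a perfectly good substitute for the citation.
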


	Each of the six graphs in the theorem requires a slightly different argument, so we will group them accordingly.

\subsection{The graph $4(1,1)$}
	
	For each odd rational prime $p$, we define
		\[
			c_p = \frac{1}{4} - \frac{1}{p^2}. 
		\]
One checks easily that the four points $\pm \frac{1}{2} \pm \frac{1}{p} $ are preperiodic for $f = f_{c_p}$, and that they yield the graph $4(1,1)$. In particular, 
	\[
		\#\PrePer(f, \QQ) \geq 4.
	\] 
We show the opposing inequality using Benedetto's Trick. 

	We use the partition of $M_\QQ$ determined by 
		\[
			A_p = M_\QQ \smallsetminus \{ 2, p, \infty\} \qquad B_p = \{2, \infty\} \qquad C_p = \{p\}. 
		\]
For $\ell \neq 2, p, \infty$,  the minimum radius of a closed disk containing the filled Julia set $\mathcal{K}_{f,\ell}$ is~1 (Proposition~\ref{Prop: Real/padic}). For $\ell = \infty$,  Proposition~\ref{Prop: Real/padic} shows that the real filled Julia set  $\mathcal{K}_{f,\infty} \cap \RR$ is contained in the closed disk about the origin with radius 
	\[
		\frac{1}{2} + \sqrt{\frac{1}{4} - c_p} = \frac{1}{2} + \frac{1}{p}  \leq \frac{5}{6}. 
	\]
Set $R_\infty = 5/6$. If $\ell = 2$, then $|c_p|_2 = 4$, so that 
	\[
		\mathcal{K}_{f,2} \subset \{x \in \CC_2 \ : \ |x|_2 = 2 \} \subset D(0,2).
	\]
Set $R_2 = 2$. Finally, if $\ell = p$, then $|c_p|_p = p^2$, so that
	\[
		\mathcal{K}_{f,p} \subset \{x \in \CC_p \ : \ |x|_p = p \}.
	\]
In particular, since $\frac{1}{2} \pm \frac{1}{p}$ are both preperiodic for $f$, and since they lie at distance $p$ from each other, the smallest disk that contains $\mathcal{K}_{f,p}$ has radius $r_p = p$. Evidently the hypotheses of Benedetto's Trick are met, so that for $p$ sufficiently large we obtain
	\[
		\#\PrePer(f, \QQ) \leq 2^{\#C_p + 1} = 4. 
	\]


\subsection{The graph $4(2)$}

	For each odd prime $p$, set 
	\[
		c_p = -\frac{3}{4} - \frac{1}{p^2}. 
	\]
The four points $\pm \frac{1}{2} \pm \frac{1}{p}$ are preperiodic for $f = f_{c_p}$, and they yield the graph $4(2)$. The rest of the proof is identical to the one used for the graph $4(1,1)$, with the exception that we take $R_\infty = \frac{1}{2} +\frac{1}{3}\sqrt{10}$. 


\subsection{The graph $6(1,1)$}

	The graph $\PrePer(f_c, \QQ)$ contains $6(1,1)$ if and only if 
		\[
			c = - \frac{2(\eta^2 + 1)}{(\eta^2 - 1)^2}
		\]
for some $\eta \in \QQ \smallsetminus \{0, \pm 1\}$ \cite[Thm.~3.2]{Poonen_Preperiodic_Classification_1998}. The known rational preperiodic points are
	\[
		\pm \frac{2\eta}{\eta^2 - 1}, \qquad \pm \frac{1}{2} \pm \frac{\eta^2 + 3}{2(\eta^2 - 1)}.
	\]
Make the change of variable $\eta = \frac{t+1}{t-1}$ to arrive at
	\[
		c = - \frac{t^4 - 2t^3 + 2t^2 - 2t + 1}{4t^2}. 
	\]
	
	Define two sequences $(p_n)$ and $(q_n)$ of odd primes such that $p_n \neq q_n$ and $p_n / q_n \to 1$ as $n \to \infty$. (This is possible, for example, by the Prime Number Theorem, since the $n$th prime is of approximate size $n \log n$.) We define $c_n$ by setting $t = p_n / q_n$ in the preceding formula:
	\[
		c_n = - \frac{p_n^4 - 2p_n^3q_n + 2p_n^2q_n^2 - 2p_nq_n^3 + q_n^4}{(2p_nq_n)^2}. 
	\]
For each such value $c_n$, the graph of preperiodic points for $f = f_{c_n}$ contains $6(1,1)$ by construction. Indeed, the six known preperiodic points are $\pm \frac{1}{2} \pm \frac{p_n^2 -p_nq_n + q_n^2}{2p_nq_n}$ and $\pm \frac{p_n^2 - q_n^2}{2p_nq_n}$.   

	Consider the partition of $M_\QQ$ given by
		\[
			A_n = M_\QQ \smallsetminus \{2, p_n, q_n, \infty\} \qquad B_n = \{2, \infty\} \qquad C_n = \{p_n, q_n\}. 
		\]
For $\ell \in A_n$, we see that $|c_n|_\ell \leq 1$, so that $\mathcal{K}_{f, \ell} = D(0,1)$ as desired. For $\ell = \infty$, the real filled Julia set $\mathcal{K}_{f,\infty} \cap \RR$ is contained in the disk about the origin of radius
		\[
			\frac{1}{2} + \sqrt{\frac{1}{4} - c}  = \frac{1}{2} + \frac{p_n^2-p_nq_n + q_n^2}{2p_nq_n} \to 1. 
		\]
Hence, there is $R_\infty > 1$ so that $\mathcal{K}_{f, \infty} \cap \RR \subset D(0, R_\infty)$ for all $n$.  For $\ell = 2$, we may take $R_2 = 2$ (Proposition~\ref{Prop: Real/padic}). Finally, for $\ell \mid p_nq_n$, we have $|c_n|_\ell = \ell^2$. There are distinct preperiodic points at distance $\ell$ from each other, so it follows that the smallest closed disk containing $\mathcal{K}_{f, \ell}$ has radius $r_\ell = \ell$. Applying Benedetto's Trick shows that for $n$ sufficiently large we have the bound
	\[
		\#\PrePer(f_{c_n}, \QQ) \leq 2^{\#C_n + 1} = 8. 
	\]
	
	To complete the proof, let us assume that $n$ is sufficiently large that $6 \leq \#\PrePer(f_{c_n}, \QQ) \leq 8$. If the lower bound is strict, then there exists a point $x \in \Aff^1(\QQ)$ that is preperiodic for $f = f_{c_n}$ and that is not represented in the graph $6(1,1)$. If the orbit of $x$ contains an $m$-cycle $\{x_1, \ldots, x_m\}$ for some $m \geq 2$, then the points $\{-x_1, \ldots, -x_m\}$ are also preperiodic and map into this $m$-cycle (because $f(z) = f(-z)$). Hence, $\#\PrePer(f, \QQ) \geq 6 + 2m > 8$, a contradiction. Therefore, the orbit of $x$ must contain a fixed point. As $f$ has at most 2 fixed points, it follows that $f(\pm x)$ is an element of the graph $6(1,1)$. 
	
	Let us say that the point $x$ is of type $a_b$ if $f^b(x)$ lies in an $a$-cycle, but $f^{b-1}(x)$ does not. By the preceding paragraph, the point $x$ is of type $1_2$ or $1_3$. The latter case is immediately ruled out by \cite[Thm.~3.6]{Poonen_Preperiodic_Classification_1998}. In the former case, $f$ has four points of type $1_2$, which is ruled out by  \cite[Thm.~3.2]{Poonen_Preperiodic_Classification_1998}. We conclude that $\#\PrePer(f_{c_n}, \QQ) = 6$, as desired.


\subsection{The graph $6(2)$}

	The strategy in this case is virtually identical to the one used in the preceding section, so we will simply sketch the differences. 
	
	The graph $\PrePer(f_c, \QQ)$ contains $6(2)$ if and only if 
		\[
			c = - \frac{\nu^4 + 2\nu^3 + 2\nu^2 - 2\nu + 1}{(\nu^2 - 1)^2}
		\]
for some $\nu \in \QQ \smallsetminus \{0, \pm 1\}$ \cite[Thm.~3.3]{Poonen_Preperiodic_Classification_1998}. Making the change of coordinate $\nu= \frac{t + 1}{t - 1}$ yields
	\[
		c = - \frac{t^4 + 2t^3 + 2t^2 - 2t + 1}{4t^2}.
	\]
Choose two sequences of primes $(p_n)$ and $(q_n)$ as before and set $t = p_n / q_n$. Then 
	\[
		c_n = - \frac{p_n^4 + 2p_n^3 q_n + 2p_n^2 q_n^2 - 2p_nq_n^3 + q_n^4}{(2p_n q_n)^2},
	\]
and the six preperiodic points that arise in this construction are $\pm \frac{p_n^2 + q_n^2}{2p_nq_n}$ and $\pm \frac{1}{2} \pm \frac{p_n^2 + p_nq_n - q_n^2}{2p_nq_n}$. 

	At the infinite prime, we see that $c_n \to -1$ with $n$. Thus there is $R_\infty > \frac{1}{2}\left(1 + \sqrt{5}\right)$ so that the real filled Julia set $\mathcal{K}_{f_{c_n}, \infty} \cap \RR$ is contained in the closed disk about the origin of radius $R_\infty$. The remainder of the application of Benedetto's Trick is identical to the preceding section. We conclude that $6 \leq \#\PrePer(f_{c_n}, \QQ) \leq 8$ for all sufficiently large $n$. If the lower inequality is sharp, then there is a rational preperiodic point $x$ of type $2_2$ or $2_3$ that has not been accounted for. Both of these cases are outlawed by \cite[Thm.~3.3, 3.6]{Poonen_Preperiodic_Classification_1998}. Hence, $\#\PrePer(f_{c_n}, \QQ) = 6$, as desired.


\subsection{The graph $6(3)$}

	According to \cite[Thm.~1.3]{Poonen_Preperiodic_Classification_1998}, the graph $\PrePer(f_c, \QQ)$ contains $6(3)$ if and only if 
		\begin{equation}
		\label{Eq: three poles}
			c = -\frac{ \tau^6 + 2\tau^5 + 4\tau^4 + 8 \tau^3 + 9 \tau^2 + 4\tau + 1}{4\tau^2(\tau+1)^2}
		\end{equation}
for some $\tau \in \QQ \smallsetminus \{0,-1\}$.  The six preperiodic points that one obtains by taking such a $c$ are 
	\[
		\pm \frac{\tau^3 + 2\tau^2 + \tau + 1}{2\tau (\tau+1)}, \qquad \pm \frac{\tau^3 - \tau - 1}{2\tau (\tau+1)},
		\qquad 		\pm \frac{\tau^3 + 2\tau^2 + 3\tau + 1}{2\tau (\tau+1)}. 
	\]

	As $c$ has three poles as a rational function in $\tau$, controlling the primes that occur in its denominator is a trickier endeavor that in the previous sections. We make use of the following result from analytic number theory:

\begin{thm}[van der Corput, 1939]
	For all $N$ sufficiently large, there exists a 3-term arithmetic progression $p, p+k, p+2k$ of primes in the interval $(N, 2N)$. 
\end{thm}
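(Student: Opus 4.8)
The statement to prove is van der Corput's theorem on 3-term arithmetic progressions of primes, but specialized to the dyadic window: for all sufficiently large $N$, there is a progression $p, p+k, p+2k$ of primes with all three terms in $(N, 2N)$. The plan is to deduce this from the (now classical, but then new) result of van der Corput that the number of 3-term progressions of primes up to $X$ grows like a positive constant times $X^2 / (\log X)^3$; equivalently, that the count of pairs $(p, k)$ with $p, p+k, p+2k$ all prime and $p+2k \le X$ is $\gg X^2 (\log X)^{-3}$. First I would recall or cite this asymptotic (van der Corput 1939, or alternatively the Hardy--Littlewood circle method as presented in modern references such as Vaughan or Nathanson), stating it as the ingredient we are allowed to invoke.

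The key step is then a dyadic-decomposition argument. Apply the asymptotic at $X = 2N$ and at $X = N$: the number of 3-term prime progressions with largest term at most $2N$ is $\gg N^2 (\log N)^{-3}$, while the number with largest term at most $N$ is $O(N^2 (\log N)^{-3})$ with a strictly smaller implied constant (indeed $\sim \tfrac14$ of the former, since the count is asymptotic to $c X^2(\log X)^{-3}$ and $(N)^2 \sim \tfrac14 (2N)^2$ while $(\log N)^{-3} \sim (\log 2N)^{-3}$). Subtracting, for $N$ large the number of 3-term prime progressions whose largest term lies in $(N, 2N]$ is still $\gg N^2 (\log N)^{-3} > 0$. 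That already produces a progression $p, p+k, p+2k$ of primes with $p+2k \in (N,2N]$; since $p \ge 2$ and $k \ge 1$ we get $p > 0$ automatically, but to force the \emph{smallest} term $p$ also to exceed $N$ I would instead run the dyadic comparison on the sub-window: count progressions with all terms in $(N, 2N)$ by inclusion--exclusion against the counts of progressions contained in $(0, 2N)$, in $(0, N]$, and those with $p \le N < p+2k$; the last of these is an $O(N^2/(\log N)^3)$ term with small constant (one can bound it crudely by the total count up to $2N$ restricted to $k \ge$ roughly $N/2$ terms is not needed — a cleaner route is to simply note that the progressions counted in $(N/\text{something}, \ldots)$ suffice). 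In practice the shortest honest argument is: the number of progressions with largest term $\le 2N$ minus the number with largest term $\le N$ minus the number with smallest term $\le N$ is positive for large $N$, because the first term is $\sim c(2N)^2(\log 2N)^{-3}$, the second $\sim cN^2(\log N)^{-3} \sim \tfrac14$ of it, and the third (smallest term at most $N$, largest term at most $2N$, so $k \le N$ and $p \le N$) is also $\le$ (total up to $2N$) but can be shown $\le (\tfrac14 + o(1))$ of the first by the same asymptotic applied with the roles of the endpoints tracked — leaving a positive proportion.

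The main obstacle I anticipate is bookkeeping rather than a genuine mathematical difficulty: making the dyadic subtraction genuinely yield a positive lower bound after excluding progressions that poke outside the open interval $(N,2N)$ on either end. The cleanest fix, and the one I would actually write, is to avoid the two-sided issue by first proving the one-sided dyadic statement (largest term in $(N,2N]$) from the asymptotic, and then observing that in such a progression the smallest term $p = (p+2k) - 2k$ — and a short counting argument shows the progressions with $k$ large (say $k > N/100$) contribute $o(N^2(\log N)^{-3})$, since such progressions are parametrized by at most $N \cdot (N/100)$... — hmm, that bound is too weak. The genuinely clean route: apply the asymptotic on the interval $(N, 2N)$ directly. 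Van der Corput's method (the circle method) gives an asymptotic for the number of prime 3-APs in \emph{any} interval $(y, 2y)$, namely $\sim \mathfrak{S} \cdot y^2 / (\log y)^3$ with $\mathfrak{S} > 0$ the singular series; one simply invokes this form. Then positivity of the singular series, together with the asymptotic exceeding its own error term for large $N$, immediately gives at least one such progression, and indeed $\gg N^2/(\log N)^3$ of them. I would present this as the proof: cite the interval version of van der Corput's asymptotic, note $\mathfrak{S} > 0$, and conclude. If only the ``up to $X$'' version is available in the chosen reference, fall back to the dyadic subtraction above, accepting the minor loss that we land in $(N, 2N]$ with $p \ge 2$, which is entirely sufficient for the application in Section 3 (where one only needs three large primes in a window whose ratio tends to $1$).
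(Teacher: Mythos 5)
Your primary route --- citing the interval form of van der Corput's circle-method asymptotic (the count of prime 3-APs lying entirely in $(N,2N)$ is $\sim \mathfrak{S}\,N^2/(\log N)^3$ with $\mathfrak{S}>0$) --- is in substance the same as the paper's proof, which is itself only a citation: the paper quotes van der Corput's 1939 result for infinitude and records, via a private communication of Granville, that the condition $N < p, p+k, p+2k < 2N$ can be imposed in the proof. So on the main line you and the paper are doing the same thing, namely asserting that the original argument localizes to dyadic windows.

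Two points in your fallback reasoning do not hold up, and since you lean on them ("if only the up-to-$X$ version is available"), they are worth flagging. First, the dyadic subtraction from the cumulative count $F(X)=\#\{(p,k): p+2k\le X,\ p,p+k,p+2k \text{ prime}\}$ only controls the \emph{largest} term; to force the smallest term above $N$ you must bound the straddling progressions ($p\le N < p+2k\le 2N$), and your own attempts at this ("parametrized by at most $N\cdot(N/100)$", "roles of the endpoints tracked") are, as you noticed, either too weak or not actual arguments. The gap is fillable --- for each prime $p\le \delta N$ a Selberg-type sieve bound for the pair of forms $p+k$, $p+2k$ gives $\ll N/(\log N)^2$ admissible $k$, and summing over $p\le\delta N$ yields $\ll \delta N^2/(\log N)^3$, which beats the main term for small $\delta$ --- but that sieve input is missing from your sketch and is not a triviality one can wave at. Second, your closing claim that the one-sided conclusion (largest term in $(N,2N]$, with $p$ possibly as small as $2$) "is entirely sufficient for the application in Section 3" is false. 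The application to the graph $6(3)$ needs all three primes to exceed $N$: this is what gives $k<p$, hence $\tfrac13 < p_n/q_n < 1$, hence boundedness of $|c_n|_\infty$ (otherwise $|c_n|\approx q_n^2/4p_n^2$ can blow up), and it also needs $p_n\to\infty$ because $p_n\in C_n$ and the uncontrolled-bad-reduction hypothesis requires $r_{n,p_n}=p_n\to\infty$. So the two-sided localization is genuinely needed, exactly as the paper states it.
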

	
\begin{proof}
	The proof that there are infinitely many 3-term arithmetic progressions of primes is given in \cite{VanDerCorput_3APs}. In a private communication, Andrew Granville has informed the author that one may impose the added condition $N < p, p+k, p+2k < 2N$ in the proof. 
\end{proof}

	The theorem implies that there exist 3-term arithmetic progressions $p, p+k, p+2k$ with $p$ arbitrarily large and $k < p$. For such an arithmetic progression, it follows that 
	\[
		\frac{1}{3} < \frac{p}{p+2k} < 1.
	\]
Let $(p_n)$ and $(q_n)$ be sequences of primes such that $p_n \to \infty$ and $p_n < \frac{p_n + q_n}{2} < q_n$ is a 3-term arithmetic progression as above. Define $c_n \in \QQ$ by setting $\tau = p_n / q_n$ in \eqref{Eq: three poles}: 
	\[
		c_n = - \frac{p_n^6 + 2p_n^5q_n + 4p_n^4q_n^2 + 8 p_n^3q_n^3 + 9 p_n^2q_n^4 + 4p_nq_n^5 + q_n^6}{\left[2p_nq_n(p_n + q_n)\right]^2}. 
	\]	
	
	Partition $M_\QQ$ using the sets
		\[
			A_n = M_\QQ \smallsetminus \{2, p_n, (p_n + q_n)/2, q_n, \infty\} \qquad B_n = \{2, \infty\} \qquad C_n = \{p_n, (p_n + q_n)/2, q_n\}.
		\]
	Set $f = f_{c_n}$. Since the ratio $p_n / q_n$ is positive and bounded away from zero and infinity, it follows that $|c_n|_\infty$ is bounded in the Archimedean metric, and hence there is $R_\infty > 1$ such that $\mathcal{K}_{f,\infty} \cap \RR$ is contained in the disk $D(0, R_\infty)$ for all $n$ (Proposition~\ref{Prop: Real/padic}).  The remainder of the application of Benedetto's Trick is similar to the section on the graph $6(1,1)$. We conclude that 
	\[
		6 \leq \#\PrePer(f_{c_n}, \QQ) \leq 2^{\#C_n + 1} = 16
	\] 
for all sufficiently large $n$. If the lower inequality is sharp, we let $x$ be a rational preperiodic point that does not lie in the graph $6(3)$ (of known points). Then the orbit of $x$ contains an $m$-cycle $\{x_1, \ldots, x_m\}$ for some $m \geq 1$ as well as the points $\{-x_1, \ldots, -x_m\}$. Now $m < 6$, since otherwise we have $ \#\PrePer(f_{c_n}, \QQ) \geq 2m + 6 > 16$. By results of Morton \cite{Morton_Arithmetic_Properties_II_1998} and Flynn/Poonen/Schaefer \cite{Poonen-Flynn-Schaefer_1997},  there does not exist a rational point of period $4$ or $5$ for any parameter $c \in \QQ$. Hence $m < 4$. The (unconditional) results in \cite{Poonen_Preperiodic_Classification_1998} now show that $f_{c_n}$ has at most one periodic orbit of period~3, and that it has no other periodic orbit. That is, $\#\PrePer(f_{c_n}, \QQ) = 6$ for $n$ sufficiently large.


\subsection{The graph $8(2,1,1)$}

	According to \cite[Thm.~2.1]{Poonen_Preperiodic_Classification_1998}, the graph $\PrePer(f_c, \QQ)$ contains $8(2,1,1)$ if and only if 
		\[
			c = - \frac{3\mu^4 + 10 \mu^2 + 3}{4(\mu^2 - 1)^2}
		\]
for some $\mu \in \QQ \smallsetminus \{0, \pm 1\}$. The eight preperiodic points that one obtains by taking such a $c$ are 
	\[
		\pm \frac{1}{2} \pm \frac{\mu^2 + 1}{\mu^2 - 1}, \qquad \pm\frac{1}{2} \pm \frac{2\mu}{\mu^2 - 1}.
	\]	
Make the change of variable $\mu = \frac{1+t}{1-t}$ to get
	\[
		c = - \frac{t^4 + t^2 + 1}{4t^2}.
	\]

	Define two sequences $(p_n)$ and $(q_n)$ of odd primes such that $p_n \neq q_n$ and $p_n / q_n \to 1$ as $n \to \infty$. We define $c_n$ by setting $t = p_n / q_n$ in the preceding formula:
	\[
		c_n = - \frac{p_n^4 + p_n^2 q_n^2 + q_n^4}{4p_n^2 q_n^2}. 
	\]
For each such value $c_n$, the graph of preperiodic points for $f = f_{c_n}$ contains $8(2,1,1)$ by construction. Indeed, the eight known preperiodic points are $\pm \frac{1}{2} \pm \frac{p_n^2 \pm q_n^2}{2p_nq_n}$.   

	Consider the partition of $M_\QQ$ given by
		\[
			A_n = M_\QQ \smallsetminus \{2, p_n, q_n, \infty\} \qquad B_n = \{2, \infty\} \qquad C_n = \{p_n, q_n\}. 
		\]
For $\ell \in A_n$, we see that $|c_n|_\ell \leq 1$, so that $\mathcal{K}_{f, \ell} = D(0,1)$ as desired. For $\ell = \infty$, the real filled Julia set $\mathcal{K}_{f,\infty} \cap \RR$ is contained in the disk about the origin of radius
		\[
			\frac{1}{2} + \sqrt{\frac{1}{4} - c}  = \frac{1}{2} + \frac{p_n^2+q_n^2}{2p_nq_n} \to \frac{3}{2}. 
		\]
Hence, there is $R_\infty > 3/2$ so that $\mathcal{K}_{f, \infty} \cap \RR \subset D(0, R_\infty)$ for all $n$.  For $\ell = 2$, we may take $R_2 = 2$ (Proposition~\ref{Prop: Real/padic}). Finally, for $\ell \mid p_nq_n$, we have $|c_n|_\ell = \ell^2$. There are distinct preperiodic points at distance $\ell$ from each other, so it follows that the smallest closed disk containing $\mathcal{K}_{f, \ell}$ has radius $r_\ell = \ell$. Applying Benedetto's Trick shows that for $n$ sufficiently large we have the bound
	\[
		\#\PrePer(f_{c_n}, \QQ) \leq 2^{\#C_n + 1} = 8. 
	\]
	

\section{Optimality}

	Recall from the introduction that a finite directed graph $G$ is \textbf{admissible} if there is a parameter $c \in \bar \QQ$ such that $\PrePer(f_c, \bar \QQ) \supset G$. Theorem~\ref{Thm: Main} is best possible in the following sense.
	
\begin{prop}
	Let $G$ be an admissible graph that is isomorphic to $\PrePer(f_c, \QQ)$ for infinitely many parameters $c \in \QQ$. Then $G$ is either empty, or it is among the six listed in Theorem~\ref{Thm: Main}.
\end{prop}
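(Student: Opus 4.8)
The plan is to argue by exhaustion over the finite list of directed graphs $G$ that can possibly equal $\PrePer(f_c,\QQ)$, using two known inputs: first, the Morton--Silverman--type classification work of Poonen and others over $\QQ$ that pins down which graphs $G$ occur as $\PrePer(f_c,\QQ)$ for \emph{any} $c \in \QQ$ (conditionally there are $12$, but the relevant structural facts — e.g. that rational points of period $4$ and $5$ do not occur, that there is at most one rational orbit of period $3$, and the rationality/finiteness statements about the modular curves parametrizing each preperiodic configuration — are unconditional); and second, the standard dichotomy that for each admissible graph $G$ the associated parameter curve $X_G$ is a curve over $\QQ$ whose rational points (up to finitely many) give parameters $c$ with $G \subset \PrePer(f_c,\QQ)$. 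The key point is: if $G$ is realized as an \emph{equality} $G = \PrePer(f_c,\QQ)$ for infinitely many $c$, then $X_G$ must have infinitely many rational points, hence (being a curve) $X_G$ must be rational or genus one with positive rank; conversely if $X_G$ has infinitely many rational points but $X_G$ maps onto (is dominated by, or admits a correspondence to) some $X_H$ with $H \supsetneq G$ that \emph{also} has infinitely many rational points, then generically the larger graph $H$ is realized and $G = \PrePer(f_c,\QQ)$ happens only finitely often.

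Concretely I would proceed as follows. First I would enumerate the admissible graphs $G$ for which $X_G(\QQ)$ is infinite; by Poonen's analysis this is a short explicit list, and each such $X_G$ is a rational curve (genus $0$ with a rational point) — the nontrivial modular curves of higher genus (the ones ruling out $N$-cycles for $N=4,5$ and long preperiodic tails) have only finitely many rational points and so cannot contribute. Second, for each $G$ on that list I would ask whether there is an admissible $H \supsetneq G$ with $X_H(\QQ)$ also infinite: if so, the inclusion $X_H \hookrightarrow X_G$ (forgetting the extra preperiodic points) has image Zariski-dense, so all but finitely many $c \in X_G(\QQ)$ lift to $X_H(\QQ)$, i.e. $\PrePer(f_c,\QQ) \supsetneq G$ for all but finitely many such $c$, and $G$ is \emph{not} realized infinitely often as an equality. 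The graphs surviving both filters — $G$ with $X_G(\QQ)$ infinite and \emph{no} strictly larger admissible $H$ with $X_H(\QQ)$ infinite — are exactly the six in Theorem~\ref{Thm: Main} (together with the empty graph, which is realized for infinitely many $c$, e.g. generic $c$, by Northcott plus the fact that "most" $c$ have no rational preperiodic points — this needs the observation that the union of all the $X_H(\QQ)$ over nonempty admissible $H$ is not all of $\AA^1(\QQ)$, which follows because each such $X_H$ is a proper subvariety image and there are finitely many maximal ones among those with infinitely many points). Theorem~\ref{Thm: Main} itself supplies the positive direction: each of the six \emph{is} realized infinitely often.

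The main obstacle is the second filter: one must be certain that for every admissible $G$ with $X_G(\QQ)$ infinite, \emph{every} strictly larger admissible $H$ has $X_H(\QQ)$ finite, for otherwise $G$ would be "absorbed" into $H$ and drop off the list — but also one must rule out the subtler possibility that $G$ with $X_G(\QQ)$ infinite is nonetheless realized only finitely often as an equality because of absorption into infinitely many different larger $H$'s collectively covering a dense subset of $X_G(\QQ)$ even though each individual $X_H(\QQ)$ is finite or thin. This is precisely the gap flagged in the introduction ("there are infinitely many admissible graphs $H$ properly containing $G$"). Resolving it rigorously requires invoking Poonen's finiteness/non-existence results to cut the poset of admissible graphs down to a \emph{finite} one (no rational $4$- or $5$-cycles, at most one rational $3$-cycle, bounded tail lengths), after which "absorption into infinitely many larger $H$" cannot happen and a finite case analysis closes the argument; that is the step where I would lean hardest on the cited theorems of Morton, Flynn--Poonen--Schaefer, and Poonen, and it is the heart of why the proposition is "best possible" rather than a routine corollary.
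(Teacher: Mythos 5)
The decisive step in your plan fails. In your second filter you assert that if $H \supsetneq G$ is admissible and both $X_G(\QQ)$ and $X_H(\QQ)$ are infinite, then the forgetful map $X_H \to X_G$ hits all but finitely many points of $X_G(\QQ)$, so that $G$ is almost never realized as an exact equality. That implication is false: the forgetful map typically has degree at least $2$, and a rational point of $X_G$ lifts to a rational point of $X_H$ only on a thin subset --- Zariski density of the image says nothing about which rational points lift. Worse, your criterion gives the wrong answer: $6(1,1)$ properly contains $4(1,1)$ and $6(2)$ properly contains $4(2)$, and all four parameter curves are rational with infinitely many rational points, so your filter would strike $4(1,1)$ and $4(2)$ from the final list, contradicting Theorem~\ref{Thm: Main}, which shows each is realized as an exact equality infinitely often. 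Conversely, knowing that no single larger $H$ has $X_H(\QQ)$ infinite does not by itself bound the exact graph of a given $f_c$; you acknowledge this (``absorption into infinitely many larger $H$'s'') and defer it to an unspecified finite case analysis, but that deferred step is the entire content of the proposition.

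The paper closes the gap differently. If $G$ is realized as an equality for infinitely many $c$ and contains a cycle of length at least $4$, then Morton's genus formula gives $X_G$ genus at least $2$, hence finitely many rational points by Faltings --- a contradiction; note this disposes of all cycle lengths $\geq 4$ at once, whereas the unconditional period-$4$ and period-$5$ results you cite do not cover length $\geq 6$. For the surviving parameters, $f_c$ has no rational point of period greater than $3$, and Poonen's unconditional classification under exactly that hypothesis says the full graph $\PrePer(f_c,\QQ)$ is one of $12$ explicit types: one empty, six as in Theorem~\ref{Thm: Main}, and five which Poonen proves occur for only finitely many $c \in \QQ$. It is this classification of the \emph{exact} graph, not any lifting statement between the curves $X_G$, that does the work, and it is the ingredient missing from your argument. (Your justification for the empty graph is also off: the images of the curves $X_H$ in the $c$-line are not proper subvarieties, since each map $X_H \to \Aff^1$ is finite and surjective; one needs a thin-set/Hilbert-irreducibility argument or the paper's explicit choice $c = 1/p$ --- though this point is not actually required for the proposition as stated.)
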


\begin{proof}
	The empty graph occurs for infinitely many parameters $c \in \QQ$. For example, set $c = 1/p$ for $p$ a prime and observe that any preperiodic point must be of the form $x = u/\sqrt{p}$ for some $u \in \ZZ \smallsetminus \{0\}$ (Proposition~\ref{Prop: Real/padic}). Hence $\PrePer(f_c, \QQ) = \varnothing$. In what remains, we may assume that $G$ is nonempty. 

The graph $G$ may not contain a directed cycle of length at least~4. For otherwise, Morton's genus formula \cite[Thm.~C]{Morton_Algebraic_Curves_1996} shows that the corresponding algebraic curve $X_G$ has genus at least~2, and hence only finitely many rational points. If $f_c$ admits no rational periodic point of period greater than 3, then Poonen has shown that the graph $\PrePer(f_c, \QQ)$ is one of 12 types \cite[Thm.~3, Fig.~1]{Poonen_Preperiodic_Classification_1998}. One of his graphs is empty; six correspond to the ones appearing in Theorem~\ref{Thm: Main}; and he proves that the remaining five types may occur for only finitely many rational parameters $c \in \QQ$. 
\end{proof}


\bibliographystyle{plain}
\bibliography{xander_bib}

\end{document}